\numberwithin{equation}{section}
\newcommand\NoBlackBoxes{\global\overfullrule0pt}
\newtheorem{definition}{Definition}[section]
\newtheorem{theorem}[definition]{Theorem}
\newtheorem{lemma}[definition]{Lemma}
\newtheorem{rem}[definition]{Remark}
\newcommand{\N}{{\mathbb N}}
\newcommand{\R}{{\mathbb R}}
\newcommand{\E}{{\mathbb E}}
\newcommand{\cov}{\mathrm{Cov}}
\newcommand{\la}{\lambda}
\newcommand{\X}{\textbf{X}}
\newcommand{\tr}{\mathrm{tr}}
\newcommand{\nc}{\mathcal{N}\mathcal{P}\mathcal{P}(k)}
\newcommand{\bpi}{\boldsymbol{\pi}}
\title{The semicircle law for matrices with independent diagonals}
\author[Olga Friesen]{Olga Friesen}
\address[Olga Friesen]{Westf\"alische Wilhelms-Universit\"at M\"unster,
Fachbereich Mathematik,
Einsteinstra\ss e 62, 48149 M\"unster, Germany}
\email[Olga Friesen]{olga.friesen@uni-muenster.de}
\author[Matthias L\"owe]{Matthias L\"owe}
\address[Matthias L\"owe]{Westf\"alische Wilhelms-Universit\"at M\"unster,
Fachbereich Mathematik,
Einsteinstra\ss e 62, 48149 M\"unster, Germany}
\email[Matthias L\"owe]{maloewe@math.uni-muenster.de}
\date{\today}
\subjclass{60K35}
\keywords{random matrix, semi-circle law, dependent entries}
\begin{document}
\begin{abstract}
We investigate the spectral distribution of random matrix ensembles with correlated entries. We consider symmetric matrices with real valued entries and
stochastically independent diagonals. Along the diagonals the entries may be correlated. We show that under sufficiently nice moment conditions
the empirical eigenvalue distribution converges almost surely weakly to the semi-circle law.

\end{abstract}

\maketitle

\bibliographystyle{alpha}

\section{Introduction}

Large-dimensional random matrices are, among others, of interest in statistics and in theoretical physics, in particular when studying the properties of atoms with heavy
nuclei. One of the most interesting and best studied questions, has been to investigate the properties of the eigenvalues of random matrices. For example, Wigner, in his
seminal paper \cite{Wigner} showed that the spectral distribution of symmetric or Hermitian random matrices with independent Gaussian entries, otherwise, under appropriate
scaling converges to the semi-circle law. This was generalized by Arnold \cite{Arnold} to the situation of symmetric or Hermitian random matrices filled with independent
and identically distributed (i.i.d.) random variables with sufficiently many moments. Other generalizations of Wigner's semi-circle law concern matrix ensembles with entries
drawn according to weighted Haar measures on classical (e.g., orthogonal, unitary, symplectic) groups. Such results are particularly interesting, since such random matrices
also play a major role in non-commutative probability (see e.g. \cite{alice_stflour}); other applications are in graph theory, combinatorics or algebra.

This note addresses a question that is much in the spirit of Arnold's generalization of the semi-circle law. Even though a couple of random matrix models include situations
with stochastically correlated entries (see especially \cite{brycdembo}, where the case of random Toeplitz and Hankel matrices is treated), the dependencies are not very
natural from a stochastic point of view. A generic way to construct random matrices with dependent entries could be to consider a two dimensional (stationary) random field
indexed by $\mathbb{Z}^d$ with correlations that decay with the distance of the indices and to take an $n \times n$ block as entries for a random $n \times n$ matrix.

The present note is a first step to study the asymptotic eigenvalue distribution of such matrix ensembles. Here we will deviate from the independence assumption by
considering (real) random fields with entries that may be dependent on each diagonal, but with stochastically independent diagonals. For such matrices we will prove a semi-circle law.
The setup may look at first glance a bit more artificial than a situation where the matrices are filled with row- or columnwise independent random variables (e.g.
with row- or columnwise independent Markov chains). Note, however, that in order guarantee for real eigenvalues we will need to restrict ourselves to symmetric random matrices.
This would imply that a matrix with rowwise independent entries above the diagonal has columnwise independent entries below it. Not only is this a rather strange setup, also can one
see from simulations that their asymptotic eigenvalue distribution is probably not the semi-circle law.

It also should be mentioned that a similar situation has been studied by Khorunzhy and Pastur in \cite{KhorunzhyPastur94}. They consider the eigenvalue distribution of so called deformed Wigner ensembles that consist of matrices which can be written as a sum of Wigner matrix (a symmetric matrix with independent entries above the diagonal) and a deterministic matrix. It is proven that in this situation the
empirical eigenvalue density converges in probability to a non-random limit. This setup, yet similar, is different from ours.

The rest of the note is organized as follows. In the second section we will formalize the situation we want to consider and state our main result. Section 3 is devoted to the
proof, that is based on a moment method. Section 4 contains some examples.

\section{Main Results}

In this section we will state our main theorem, a semi-circle law for symmetric random matrices with independent diagonals (for a precise formulation see Theorem
\ref{main} below). The limit law for their empirical eigenvalue distribution is the semi-circle distribution. Its density is given by
$$
f(x)=\left\{\begin{array}{ll}
\frac 1 {2\pi} \sqrt {4-x^2} & \mbox{if } -2 \le x \le 2\\
0  & \mbox{otherwise.}
\end{array} \right.
$$

We want to consider the following setup: Let $\left\{a(p,q), 1\leq p\leq q< \infty\right\}$ be a real valued random field.
For any $n\in \mathbb{N}$, define the symmetric random $n\times n$ matrix $\X_n$ by
\begin{equation*}
 \X_n (q,p) = \X_n (p,q) = \frac{1}{\sqrt{n}} a(p,q), \qquad 1\leq p\leq q\leq n,
\end{equation*}
We will have to impose the following conditions on $\X_n$: \\

\begin{enumerate}
 \item[(C1)] $\mathbb{E}\left[a(p,q)\right]=0$, $\mathbb{E}\left[a(p,q)^{2}\right] = 1$ and
\begin{equation*}
 m_k:=\sup_{n\in\mathbb{N}} \max_{1\leq p\leq q\leq n} \mathbb{E}\left[\left|a(p,q)\right|^{k}\right] < \infty, \quad k\in\mathbb{N}.
\end{equation*}
 \item[(C2)] the diagonals of $\X_n$, i.e. the families $\left\{a(p,p+r), p\in\mathbb{N}\right\}$, $r\in\mathbb{N}_0$, are independent,
 \item[(C3)] the covariance of two entries on the same diagonal depends only on their distance, i.e. for any $\tau\in\mathbb{N}_0$ we can define
	\begin{equation*}
       	\cov(\tau) := \cov(a(p,q),a(p+\tau,q+\tau)), \qquad p,q\in\mathbb{N},
	\end{equation*}
 \item[(C4)] the entries on the diagonals have a quickly decaying dependency structure, which will be expressed in terms of the condition
	\begin{equation*}
	 \sum_{\tau=0}^{\infty} \left|\cov(\tau)\right| < \infty.
	\end{equation*}
\end{enumerate}

We will denote the (real) eigenvalues of $ \textbf{X}_{n}$ by $\lambda_1^{(n)} \le \lambda_2^{(n)} \le \ldots \lambda_n^{(n)}$. Let $\mu_n$ be the empirical eigenvalue distribution, i.e.
\begin{equation*}
\mu_n = \frac{1}{n} \sum_{k=1}^n \delta_{\lambda_k^{(n)}}.
\end{equation*}

With these notations we are able to formulate the
central result of this note.

\begin{theorem}
Assume that the symmetric random matrix $\X_n$ as defined above satisfies the conditions $(C1)$, $(C2)$, $(C3)$ and $(C4)$. Then, with probability $1$, the empirical spectral distribution of $\textbf{X}_n$ converges weakly to the standard semi-circle distribution, i.e.
\begin{equation*}
\mu_n \Rightarrow \mu \qquad \mbox{ as } n \to \infty
\end{equation*}
both, in expectation and $\mathbb{P}-\mbox{almost surely}$.
Here ''$\Rightarrow$'' denotes weak convergence.
\label{main}
\end{theorem}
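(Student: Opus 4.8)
The plan is to use the method of moments. Since the semi-circle distribution $\mu$ is determined by its moments---its $2m$-th moment equals the Catalan number $C_m=\frac{1}{m+1}\binom{2m}{m}$, its odd moments vanish, and these grow slowly enough to satisfy Carleman's condition---it suffices to show that for every fixed $k\in\N$ the empirical moment $\int x^k\,d\mu_n(x)=\frac1n\tr(\X_n^k)$ converges to $\int x^k\,d\mu(x)$, both in expectation and almost surely.

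First I would expand
\begin{equation*}
\frac1n\tr(\X_n^k)=\frac{1}{n^{1+k/2}}\sum_{i_1,\dots,i_k} a(i_1,i_2)\,a(i_2,i_3)\cdots a(i_k,i_1),
\end{equation*}
with the convention $a(p,q)=a(q,p)$, so that each summand corresponds to a closed walk $i_1\to i_2\to\cdots\to i_k\to i_1$ on $\{1,\dots,n\}$. Taking expectations and grouping the $k$ edges of the walk according to the diagonal $|i_s-i_{s+1}|$ on which each lies, condition (C2) lets me factor $\mathbb{E}[\prod_s a(i_s,i_{s+1})]$ into a product over diagonals, and condition (C1), $\mathbb{E}[a(p,q)]=0$, forces every diagonal that occurs to carry at least two edges, since a diagonal carrying a single edge contributes a vanishing factor.

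The heart of the argument is a power count. As in the classical Wigner--Arnold computation, a connected closed walk of length $k$ visiting $t$ distinct vertices has at least $t-1$ distinct edges; since every occupied diagonal carries $\ge 2$ edges one obtains $t\le k/2+1$, with equality only when $k$ is even, every edge is traversed exactly twice, and the underlying graph is a tree. For such tree-like walks each diagonal consists of a single doubled edge whose contribution is $\mathbb{E}[a(p,q)^2]=1$ by (C1), so the correlations drop out entirely and the leading coefficient is again the Catalan number $C_{k/2}$. The delicate point, which I expect to be the main obstacle, is to show that configurations in which some diagonal carries two or more \emph{distinct} edges are of strictly lower order. This is exactly where (C4) enters: two distinct edges on one diagonal contribute a factor $\cov(\tau)$ depending only on their displacement $\tau$ along the diagonal, and the summability $\sum_\tau|\cov(\tau)|<\infty$ means that summing over this relative displacement produces a bounded factor rather than a factor of $n$. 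Thus each genuine correlation costs at least one power of $n$ relative to the tree-like count, while joint moments on diagonals carrying many edges are controlled by H\"older's inequality and the uniform bound $m_k<\infty$ of (C1). This yields $\mathbb{E}[\frac1n\tr\X_n^k]\to C_{k/2}$ for even $k$ and $\to 0$ for odd $k$.

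For the almost-sure statement I would establish the variance bound $\V(\frac1n\tr\X_n^k)=O(n^{-2})$. Expanding $\mathbb{E}[(\tr\X_n^k)^2]$ as a double sum over pairs of closed walks, the pairs lying on disjoint families of diagonals factor by (C2) and cancel against $(\mathbb{E}\tr\X_n^k)^2$; the surviving pairs must share at least one diagonal, and the same vertex-counting and covariance estimates as above show that these correlated contributions are suppressed by a factor $n^{-2}$. Summability of $n^{-2}$ over $n$ together with the Borel--Cantelli lemma then gives almost-sure convergence of each moment, and since this holds simultaneously for all $k\in\N$ on a single set of probability one, the moment-determinacy of $\mu$ upgrades it to the almost-sure weak convergence $\mu_n\Rightarrow\mu$.
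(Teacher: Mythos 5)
Your treatment of the convergence in expectation is essentially the paper's argument in different clothing: the closed-walk expansion grouped by diagonals is the paper's decomposition into $\pi$ consistent sequences, the tree-like doubled-edge terms give the Catalan count, and using (C4) so that a diagonal carrying two \emph{distinct} edges costs a bounded factor (sum over the displacement $\tau$) instead of a factor $n$ is exactly the paper's Lemma on crossing pair partitions. (One imprecision: ``every occupied diagonal carries at least two edges'' bounds the number of occupied \emph{diagonals} by $k/2$, not the number of distinct edges, so $t\le k/2+1$ does not follow as stated; but since you then treat diagonals with several distinct edges separately via (C4), this matches the paper's actual counting, which proceeds diagonal by diagonal rather than edge by edge.)

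The almost-sure part, however, contains a genuine gap: the variance bound $\V\left(\frac1n\tr\X_n^k\right)=O(n^{-2})$ is \emph{false} under (C1)--(C4). Two walks can share a diagonal without sharing an edge, and the dominant surviving pairs in your expansion are two tree-like walks whose doubled edges $e_1,e_2$ lie on a common diagonal at displacement $\tau$; their covariance is $\cov\left(a(e_1)^2,a(e_2)^2\right)$, a covariance of \emph{squares}, which (C3)--(C4) do not control at all (they constrain only covariances of the entries themselves). Concretely, take $a(p,p+r)=\sigma_r\, g_p^{(r)}$ with $\{g_p^{(r)}\}$ i.i.d.\ standard Gaussian and $\sigma_r$ independent bounded random variables with $\E[\sigma_r^2]=1$ and $\E[\sigma_r^4]>1$, everything independent across diagonals. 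Then (C1)--(C4) hold with $\cov(\tau)=0$ for $\tau\ge 1$, yet already for $k=2$ one computes
\begin{equation*}
\V\left(\tfrac1n\tr\X_n^2\right) \;=\; \frac{1}{n^4}\sum_{p\le q,\;p'\le q'}\cov\left(a(p,q)^2,\,a(p',q')^2\right)\;\asymp\;\frac1n,
\end{equation*}
since the $\asymp n^3$ same-diagonal quadruples each contribute the constant $\E[\sigma^4]-1>0$. So Chebyshev gives only a bound of order $n^{-1}$, which is not summable, and Borel--Cantelli fails along the full sequence. This is precisely why the paper works with the \emph{fourth} central moment: it proves $\E\left[\left(\tr\X_n^k-\E\left[\tr\X_n^k\right]\right)^4\right]\le C n^2$ via a counting argument over ``matched'' partitions (note the variance itself may genuinely be of order $n$), whence $\P\left(\left|\frac1n\tr\X_n^k-\E\left[\frac1n\tr\X_n^k\right]\right|>\vep\right)\le C\vep^{-4}n^{-2}$, which is summable. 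To repair your argument you would have to either pass to fourth moments as the paper does, strengthen (C4) to summability of covariances of squares (which changes the theorem), or combine your $O(n^{-1})$ variance bound with a subsequence-plus-interlacing argument; as written, the step fails.
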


\begin{rem}
\normalfont{
 Note that in order for the semi-circle law to hold, it is not possible to renounce condition $(C4)$ without any replacement. Consider for example a Toeplitz matrix, that is a Hermitian matrix with identical entries on each diagonal. For such a matrix, we clearly have
\begin{equation*}
 \sum_{\tau=0}^{\infty} \left|\cov(\tau)\right| = \infty.
\end{equation*}
Indeed, it was shown in \cite{brycdembo} that the empirical distribution of a sequence of Toeplitz matrices tends with probability $1$ to a nonrandom probability measure with unbounded support.
}
\end{rem}

\section{Proof of Theorem \ref{main}}
We want to resort to the method of moments to prove Theorem \ref{main} (this method has been applied in similar situations in \cite{Arnold} or \cite{Schenker_Schulz-Baldes}, among (many) others). To this end, let $Y$ be distributed according to the semi-circle distribution. For the proof of the theorem it will be important to notice that the moments of $Y$ are given by
\begin{equation}
\E(Y^{k})=\begin{cases} 0, & \text{if} \ k \ \text{is odd}, \\ C_{\frac{k}{2}}, & \text{if} \ k \ \text{is even}, \end{cases}
\end{equation}
where $C_{\frac{k}{2}} = \frac{k!}{\frac{k}{2}!\left(\frac{k}{2}+1\right)!}$ denote the Catalan numbers. Since these moments determine the semicircle distribution uniquely, the weak convergence of the expected empirical distribution will follow from the relation
\begin{equation*}
 \lim_{n\to\infty} \frac{1}{n} \E\left[\tr\left(\X_{n}^{k}\right)\right] = \begin{cases} 0, & \text{if} \ k \ \text{is odd}, \\ C_{\frac{k}{2}}, & \text{if} \ k \ \text{is even,} \end{cases}
\end{equation*}
where $\tr(\cdot)$ denotes the trace operator. The first part of the proof is to verify this convergence.\\

To start with, consider the set $\mathcal{T}_n(k)$ of $k$-tuples of consistent pairs, that is elements of the form $\left(P_1,\ldots,P_k\right)$ with $P_j = (p_j,q_j) \in \left\{1,\ldots,n\right\}^2$ satisfying $q_j = p_{j+1}$ for any $j=1,\ldots,k$, where $k+1$ is identified with $1$. Then, we have
\begin{equation*}
 \frac{1}{n} \E\left[\tr\left(\X_{n}^{k}\right)\right] = \frac{1}{n^{1+\frac{k}{2}}} \sum_{\left(P_1,\ldots,P_k\right)\in\mathcal{T}_n(k)} \E\left[a(P_1)\cdot \ldots \cdot a(P_k)\right].
\end{equation*}
Further, define $\mathcal{P}(k)$ to be the set of all partitions $\pi$ of $\left\{1,\ldots,k\right\}$. Any partition $\pi$ induces an equivalence relation $\sim_\pi$ on $\left\{1,\ldots,k\right\}$ by
\begin{equation*}
 i\sim_\pi j \quad :\Longleftrightarrow \quad \text{$i$ and $j$ belong to the same set of the partition} \ \pi.
\end{equation*}

We say that an element $\left(P_1,\ldots,P_k\right)\in\mathcal{T}_n(k)$ is a $\pi$ consistent sequence if
\begin{equation*}
 \left|p_i - q_i\right| = \left|p_j - q_j\right| \quad \Longleftrightarrow \quad i\sim_{\pi} j.
\end{equation*}

Due to condition $(C2)$, this implies that $a(P_{i_1}),\ldots,a(P_{i_l})$ are stochastically independent if $i_1,\ldots,i_l$ belong to $l$ different blocks of $\pi$. The set of all $\pi$ consistent sequences $\left(P_1,\ldots,P_k\right)\in\mathcal{T}_n(k)$ is denoted by $S_n(\pi)$. Thus, we can write
\begin{equation*}
 \frac{1}{n} \E\left[\tr\left(\X_{n}^{k}\right)\right] = \frac{1}{n^{1+\frac{k}{2}}} \sum_{\pi \in \mathcal{P}(k)} \sum_{\left(P_1,\ldots,P_k\right)\in S_n(\pi)} \E\left[a(P_1)\cdot \ldots \cdot a(P_k)\right].
\label{sum}
\end{equation*}
Now fix a $k\in\mathbb{N}$. For any $\pi\in\mathcal{P}(k)$ let $\# \pi$ denote the number of equivalence classes of $\pi$. We distinguish different cases.\\

\noindent \textbf{First case:} $\quad \# \pi > \frac{k}{2}$ \\

Since $\pi$ is a partition of $\left\{1,\ldots,k\right\}$, there is at least one equivalence class with a single element $l$. Consequently, for any sequence $\left(P_1,\ldots,P_k\right)\in S_n(\pi)$ we have
\begin{equation*}
 \E\left[a(P_1)\cdot \ldots \cdot a(P_k)\right] = \E\Big[\prod_{i\neq l}a(P_i)\Big] \cdot \E\left[a(P_l)\right] = 0,
\end{equation*}
due to the independence of elements in different equivalence classes.

Hence, we obtain
\begin{equation*}
 \frac{1}{n} \E\left[\tr\left(\X_{n}^{k}\right)\right] = \frac{1}{n^{1+\frac{k}{2}}} \sum_{\underset{\# \pi \leq \frac{k}{2}}{\pi \in \mathcal{P}(k),}} \sum_{\left(P_1,\ldots,P_k\right)\in S_n(\pi)} \E\left[a(P_1)\cdot \ldots \cdot a(P_k)\right].
\end{equation*}

\noindent \textbf{Second case:} $\quad r:= \# \pi < \frac{k}{2}$ \\

We need to calculate $\# S_n(\pi)$. To fix an element $\left(P_1,\ldots,P_k\right)\in S_n(\pi)$, we first choose the pair $P_1 = (p_1,q_1)$. There are at most $n$ possibilities to assign a value to $p_1$ and another $n$ possibilities for $q_1$. To fix $P_2 = (p_2,q_2)$, note that the consistency of the pairs implies $p_2 = q_1$. If now $1\sim_\pi 2$, the condition $\left|p_1 - q_1\right| = \left|p_2 - q_2\right|$ allows at most two choices for $q_2$. Otherwise, if $1\not\sim_\pi 2$, we have at most $n$ possibilities. We now proceed sequentially to determine the remaining pairs. When arriving at some index $i$, we check whether $i$ is equivalent to any preceding index $1,\ldots,i-1$. If this is the case, then we have at most two choices for $P_i$ and otherwise, we have $n$. Since there are exactly $r$ different equivalence classes, we can conclude that
\begin{equation*}
 \# S_n(\pi) \leq n^2 \cdot n^{r-1} \cdot 2^{k-r} \leq C \cdot n^{r+1}
\end{equation*}
with a constant $C=C(r,k)$ depending on $r$ and $k$.

Now the uniform boundedness of the moments and the H\"{o}lder inequality together imply that for any sequence $(P_1,\ldots,P_k)$,
\begin{equation}
 \left|\E\left[a(P_1)\cdot \ldots \cdot a(P_k)\right] \right| \leq \left[\E\left|a(P_1)\right|^{k}\right]^{\frac{1}{k}} \cdot \ldots \cdot \left[\E\left|a(P_k)\right|^{k}\right]^{\frac{1}{k}} \leq m_k.
\label{holder}
\end{equation}
Consequently, taking account of the relation $r< \frac{k}{2}$, we get
\begin{equation*}
 \frac{1}{n^{1+\frac{k}{2}}} \sum_{\underset{\# \pi < \frac{k}{2}}{\pi \in \mathcal{P}(k),}} \sum_{\left(P_1,\ldots,P_k\right)\in S_n(\pi)} \left|\E\left[a(P_1)\cdot \ldots \cdot a(P_k)\right] \right| \leq C \cdot \frac{1}{n^{1+\frac{k}{2}}} \cdot n^{r+1} = o(1).
\end{equation*} \\

\noindent Combining the calculations in the first and the second case, we can conclude that
\begin{equation*}
 \lim_{n\to\infty} \frac{1}{n} \E\left[\tr\left(\X_{n}^{k}\right)\right] = \lim_{n\to\infty} \frac{1}{n^{1+\frac{k}{2}}} \sum_{\underset{\# \pi = \frac{k}{2}}{\pi \in \mathcal{P}(k),}} \sum_{\left(P_1,\ldots,P_k\right)\in S_n(\pi)} \E\left[a(P_1)\cdot \ldots \cdot a(P_k)\right],
\end{equation*}
if the limits exist. \\

\noindent Now consider the case where $k$ is \textbf{odd}. Since then the condition $\# \pi = \frac{k}{2}$ cannot be satisfied, the considerations above immediately yield
\begin{equation*}
 \lim_{n\to\infty} \frac{1}{n} \E\left[\tr\left(\X_{n}^{k}\right)\right] = 0.
\end{equation*} \\

\noindent It remains to cope with \textbf{even} $k$. Denote by $\mathcal{P}\mathcal{P}(k)\subset \mathcal{P}(k)$
the set of all pair partitions of $\left\{1,\ldots,k\right\}$. In particular, $\# \pi = \frac{k}{2}$ for any $\pi \in \mathcal{P}\mathcal{P}(k)$.
On the other hand, if $\#\pi = \frac{k}{2}$ but $\pi \notin \mathcal{P}\mathcal{P}(k)$, we can conclude that $\pi$ has at least one equivalence class with a
single element and hence, as in the first case, the expectation corresponding to the $\pi$ consistent sequences will become zero. Consequently,
\begin{equation*}
 \lim_{n\to\infty} \frac{1}{n} \E\left[\tr\left(\X_{n}^{k}\right)\right] = \lim_{n\to\infty} \frac{1}{n^{1+\frac{k}{2}}} \sum_{\pi\in\mathcal{P}\mathcal{P}(k)} \sum_{\left(P_1,\ldots,P_k\right)\in S_n(\pi)} \E\left[a(P_1)\cdot \ldots \cdot a(P_k)\right],
\end{equation*}
if the limits exist. We have now reduced the original set $\mathcal{P}(k)$ to the subset $\mathcal{P}\mathcal{P}(k)$. Next we want to fix a $\pi\in\mathcal{P}\mathcal{P}(k)$ and cope with the set $S_n(\pi)$.

\begin{lemma}[cf. \cite{brycdembo}, Proposition 4.4.]
 Let $S_{n}^{*}(\pi) \subseteq S_n(\pi)$ denote the set of $\pi$ consistent sequences $(P_1,\ldots,P_k)$ satisfying
\begin{equation*}
 i\sim_\pi j \quad \Longrightarrow \quad q_i - p_i = p_j - q_j
\end{equation*}
for all $i\neq j$. Then, we have
\begin{equation*}
 \# \left(S_n(\pi)\backslash S_{n}^{*}(\pi)\right) = o\left(n^{1+\frac{k}{2}}\right).
\end{equation*}
\label{snstar}
\end{lemma}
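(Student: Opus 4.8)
The plan is to reinterpret each consistent sequence as a closed walk and then exploit the cyclic closure relation. Writing $v_0 := p_1$ and $v_j := q_j = p_{j+1}$ for $j = 1, \ldots, k$ (indices mod $k$), a consistent sequence $(P_1, \ldots, P_k) \in \mathcal{T}_n(k)$ is nothing but a closed walk $v_0, v_1, \ldots, v_{k-1}, v_k = v_0$ on $\{1, \ldots, n\}$, whose $j$-th step uses the pair $P_j = (v_{j-1}, v_j)$. I would introduce the signed displacements $d_j := q_j - p_j = v_j - v_{j-1}$. Since the walk is closed, the closure relation
\begin{equation*}
 \sum_{j=1}^k d_j = v_k - v_0 = 0
\end{equation*}
holds for \emph{every} consistent sequence. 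In these terms $\pi$ consistency means that $|d_i| = |d_j|$ precisely when $i \sim_\pi j$, and, since $q_i - p_i = d_i$ while $p_j - q_j = -d_j$, the defining condition of $S_n^*(\pi)$ reads $d_i + d_j = 0$ for every block $\{i,j\}$ of $\pi$.

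Fix $\pi \in \mathcal{P}\mathcal{P}(k)$. I would parametrize $S_n(\pi)$ by choosing, for each block $b = \{i,j\}$ with $i < j$, the signed displacement $\delta_b := d_i$ together with a sign $\epsilon_b \in \{+1, -1\}$ such that $d_j = \epsilon_b\, \delta_b$ (possible since $|d_i| = |d_j|$), along with the starting vertex $v_0$. A sequence lies in $S_n^*(\pi)$ exactly when $\epsilon_b = -1$ for all blocks; a sequence in $S_n(\pi) \setminus S_n^*(\pi)$ therefore has at least one ``bad'' block, a block with $\epsilon_b = +1$ and $\delta_b \neq 0$. The decisive observation is obtained by feeding the sign pattern into the closure relation: grouping by blocks gives
\begin{equation*}
 0 = \sum_{j=1}^k d_j = \sum_b (1 + \epsilon_b)\, \delta_b = 2 \sum_{b\,:\, \epsilon_b = +1} \delta_b,
\end{equation*}
so that for sequences in $S_n^*(\pi)$ the closure relation is satisfied automatically, whereas setting $B := \{b : \epsilon_b = +1\}$, for a non-$S^*$ sequence $B \neq \emptyset$ and the closure relation imposes the nontrivial linear constraint $\sum_{b \in B} \delta_b = 0$.

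Finally I would turn this into the counting bound. Splitting $S_n(\pi) \setminus S_n^*(\pi)$ according to the (at most $2^{k/2}$) sign patterns $(\epsilon_b)_b$ with $B \neq \emptyset$, I fix such a pattern and bound the number of corresponding sequences: there are at most $n$ choices for $v_0$ and at most $2n$ choices for each $\delta_b$ (as $|\delta_b| < n$), but the constraint $\sum_{b \in B} \delta_b = 0$ is a nontrivial linear equation that expresses one displacement through the others, leaving at most $k/2 - 1$ freely chosen displacements. Hence each sign pattern contributes at most $n \cdot (2n)^{k/2 - 1} = O(n^{k/2})$ sequences, and summing over the boundedly many patterns yields
\begin{equation*}
 \# \left(S_n(\pi) \setminus S_n^*(\pi)\right) = O\!\left(n^{k/2}\right) = o\!\left(n^{1 + \frac{k}{2}}\right),
\end{equation*}
as claimed; the range restriction $v_\ell \in \{1, \ldots, n\}$ along the walk only decreases these counts and is harmless for an upper bound. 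The main point, and the only place where something genuinely has to be verified, is that the closure relation degenerates into a nontrivial linear equation exactly for the non-$S^*$ sequences, thereby killing one degree of freedom and saving the crucial factor of $n$; the surrounding signs-and-ranges bookkeeping is routine.
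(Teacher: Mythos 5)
Your proof is correct and takes essentially the same route as the paper's: both exploit the cyclic closure relation $\sum_{j=1}^k (q_j - p_j) = 0$ to show that any sequence in $S_n(\pi)\backslash S_n^*(\pi)$ satisfies a nontrivial linear constraint among its block displacements, which kills one factor of $n$ and yields the count $O_k(1)\cdot n \cdot n^{\frac{k}{2}-1} = O\left(n^{\frac{k}{2}}\right)$. The paper organizes this via the existence of a ``positive pair'' and sequential reconstruction of the sequence, while you organize it via sign patterns on the blocks of $\pi$; this is the same argument with slightly cleaner bookkeeping.
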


\begin{proof}
 We call a pair $(P_i,P_j)$ with $i\sim_\pi j$, $i\neq j$, positive if $q_i-p_i = q_j - p_j > 0$ and negative if $q_i - p_i = q_j - p_j < 0$. Since $\sum_{i=1}^k q_i - p_i = 0$ by consistency, the existence of a negative pair implies the existence of a positive one. Thus, we can assume that any sequence $(P_1,\ldots,P_k) \in S_n(\pi)\backslash S_{n}^{*}(\pi)$ contains a positive pair $(P_l,P_m)$. To fix such a sequence, we first determine the positions of $l$ and $m$ and then, we fix the signs of the remaining differences $q_i - p_i$. The number of possibilities to accomplish that depends only on $k$ and not on $n$. Now we choose one of $n$ possible values for $p_l$. In a next step, we fix the values of the differences $\left|q_i - p_i\right|$ for all $P_i$ except for $P_l$ and $P_m$. Since in each case two pairs are equivalent, i.e. the difference of the indices is equal, we have $n^{\frac{k}{2}-1}$ possibilities for that. Then, $\sum_{i=1}^k q_i - p_i = 0$ implies that
\begin{equation*}
 0 < 2(q_l - p_l) = q_l - p_l + q_m - p_m = \sum_{\underset{i\neq l,m}{i=1,}}^k p_i - q_i.
\end{equation*}
Since we have already chosen the signs of the differences as well as their absolute values, we know the value of the sum on the right hand side. Hence, the difference $q_l - p_l = q_m - p_m$ is fixed. We now have the index $p_l$, all differences $\left|q_i - p_i\right|, i\in\left\{1,\ldots,k\right\}$, and their signs. Thus, we can start at $P_l$ and go systematically through the whole sequence $(P_1,\ldots,P_k)$ to see that it is uniquely determined. Consequently, our considerations lead to
\begin{equation*}
 \# \left(S_n(\pi)\backslash S_{n}^{*}(\pi)\right) \leq C \cdot n^{\frac{k}{2}} = o\left(n^{1+\frac{k}{2}}\right).
\end{equation*}

\end{proof}

\noindent As a consequence of Lemma~\ref{snstar} and relation \eqref{holder}, we obtain

\begin{equation*}
 \lim_{n\to\infty} \frac{1}{n} \E\left[\tr\left(\X_{n}^{k}\right)\right] = \lim_{n\to\infty} \frac{1}{n^{1+\frac{k}{2}}} \sum_{\pi\in\mathcal{P}\mathcal{P}(k)} \sum_{\left(P_1,\ldots,P_k\right)\in S_{n}^{*}(\pi)} \E\left[a(P_1)\cdot \ldots \cdot a(P_k)\right],
\end{equation*}

\noindent if the limits exist. \\

\noindent We call a pair partition $\pi \in \mathcal{P}\mathcal{P}(k)$ \textbf{crossing} if there are indices $i<j<l<m$ with $i\sim_\pi l$ and $j\sim_\pi m$. Otherwise, we call $\pi$ \textbf{non-crossing}. The set of all non-crossing pair partitions is denoted by $\nc$.

\begin{lemma}
 For any crossing $\pi \in \mathcal{P}\mathcal{P}(k) \backslash \nc$, we have
\begin{equation*}
 \sum_{\left(P_1,\ldots,P_k\right)\in S_{n}^{*}(\pi)} \E\left[a(P_1)\cdot \ldots \cdot a(P_k)\right] = o\left(n^{\frac{k}{2}+1}\right).
\end{equation*}
\label{crossing}
\end{lemma}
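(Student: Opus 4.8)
The plan is to exploit the independence of the diagonals to factor the expectation over the blocks of $\pi$, and then to show that a single crossing forces one of the resulting covariance factors to depend nontrivially on a summation variable, so that the summability condition $(C4)$ replaces a factor $n$ by a convergent series. First I would observe that for $(P_1,\ldots,P_k)\in S_n^*(\pi)$ the two entries in distinct blocks of $\pi$ sit on distinct diagonals --- this is exactly the requirement $|p_i-q_i|=|p_j-q_j|\Leftrightarrow i\sim_\pi j$ built into $S_n(\pi)$ --- so $(C2)$ yields the factorization
\[
 \E[a(P_1)\cdots a(P_k)] = \prod_{\{i,j\}} \E[a(P_i)a(P_j)],
\]
the product running over the $\tfrac{k}{2}$ blocks $\{i,j\}$ of $\pi$. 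As the entries are centered, each factor is a covariance, and by $(C3)$ it equals $\cov(\tau_{ij})$, where $\tau_{ij}$ is the distance along the common diagonal between $a(P_i)$ and $a(P_j)$.

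Next I would make $\tau_{ij}$ explicit. Viewing a consistent tuple as the closed walk $p_1\to q_1=p_2\to\cdots\to q_k=p_1$ with increments $d_a:=q_a-p_a$, the defining relation of $S_n^*(\pi)$ reads $d_i=-d_j$ for $i\sim_\pi j$. A short case distinction on the sign of $d_i$ then gives, for a block $\{i,j\}$ with $i<j$,
\[
 \tau_{ij} = \Big|\sum_{a=i+1}^{j-1} d_a\Big|.
\]
I would parametrize $S_n^*(\pi)$ by the starting point $p_1$ together with one signed increment $s_{\{i,j\}}:=d_i$ per block (the partner increment being $-s_{\{i,j\}}$), which determines the whole tuple. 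Two features matter: the weight $\prod\cov(\tau_{ij})$ is translation invariant, hence independent of $p_1$, so $p_1$ contributes at most a factor $n$; and each $\tau_{ij}$ is the modulus of a linear form $L_{\{i,j\}}$ in the $s$'s with coefficients in $\{-1,0,1\}$, in which $s_{\{i',j'\}}$ occurs precisely when the block $\{i',j'\}$ has exactly one endpoint strictly between $i$ and $j$, i.e.\ when $\{i',j'\}$ crosses $\{i,j\}$.

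The crux is then immediate. Since $\pi$ is crossing there are $i<j<l<m$ with $i\sim_\pi l$ and $j\sim_\pi m$; for the block $b_0:=\{i,l\}$ the partner $m$ of $j$ lies outside $(i,l)$, so $s_{\{j,m\}}$ appears in $L_{b_0}$ with coefficient $\pm1$ and $L_{b_0}\not\equiv 0$. Using $|\cov(\tau)|\le\cov(0)=1$ (Cauchy--Schwarz together with $(C1)$) to discard all factors but $b_0$, I would bound
\[
 \Big|\sum_{(P_1,\ldots,P_k)\in S_n^*(\pi)} \E[a(P_1)\cdots a(P_k)]\Big| \le n\sum_{s\in\{-n,\ldots,n\}^{k/2}} \big|\cov(|L_{b_0}(s)|)\big|.
\]
Performing the inner sum over the variable $s_{\{j,m\}}$ first (it enters $L_{b_0}$ with coefficient $\pm1$) gives $\sum_{t\in\mathbb{Z}}|\cov(|t|)|<\infty$ by $(C4)$, while the remaining $\tfrac{k}{2}-1$ increments each range over at most $2n+1$ values; the right-hand side is therefore $O(n\cdot n^{k/2-1})=O(n^{k/2})=o(n^{\frac{k}{2}+1})$, as claimed.

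The step I expect to be the main obstacle is the explicit identification of $\tau_{ij}$ and the bookkeeping showing that a crossing yields a genuinely nonzero form $L_{b_0}$; once these are in hand the estimate is a routine use of $(C4)$ and $|\cov|\le1$. It is worth noting that the same formula gives $\tau_{ij}\equiv0$ for every non-crossing $\pi$, since then each interval $(i,j)$ between partners is a union of complete blocks --- this is precisely what will later single out $\nc$ and produce the Catalan numbers.
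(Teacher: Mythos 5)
Your proof is correct, and it reaches the same strengthened bound $O(n^{k/2})$ as the paper, but by a genuinely different implementation of the same underlying mechanism. The paper never introduces your global parametrization; it argues in two stages. First it inductively eliminates adjacent equivalent pairs: if $l\sim_\pi l+1$, then $a(P_l)=a(P_{l+1})$, contributing an expectation factor $1$ at the cost of a factor $n$ for the choice of $q_l=p_{l+1}$; this reduces matters to a still-crossing pair partition of $k-2r$ indices. Second, in the reduced partition it selects an equivalent pair $i\sim i+j$ with the gap $j\geq 2$ minimal, counts the completions of a sequence once $p_i^{(r)}$ and $q_{i+j}^{(r)}$ are fixed (at most $n^{k/2-r-1}$), and sums $|\cov(|q^{(r)}_{i+j}-p^{(r)}_i|)|$ over the $n^2$ choices of these two endpoints, which is $O(n)$ by $(C4)$. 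Your argument replaces both stages by the single injective parametrization of $S_n^*(\pi)$ by $(p_1,s)$, the explicit identity $\tau_{ij}=\bigl|\sum_{a=i+1}^{j-1}d_a\bigr|$, and the observation that a crossing partner forces a coefficient $\pm 1$ in the linear form $L_{b_0}$, so that summing over that one increment invokes $(C4)$. The analytic heart is identical in both proofs --- factor the expectation over blocks via $(C2)$/$(C3)$, bound all covariance factors but one by $1$ via Cauchy--Schwarz, and let $(C4)$ convert a single factor $n$ into a convergent series --- but your bookkeeping buys several things: no induction and no minimal-gap selection are needed (adjacent blocks, which obstruct the paper's gap argument and must be removed there first, are harmless free summation variables for you); the role of crossings is structurally transparent (a crossing is exactly what makes some $L_{b}$ nonzero); and you get for free that $\tau_{ij}\equiv 0$ for non-crossing partitions, which is precisely the content of the paper's subsequent Lemma~\ref{le1}. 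In a written-up version you should include the sign case distinction proving the formula for $\tau_{ij}$ and note explicitly that the map $(P_1,\ldots,P_k)\mapsto(p_1,s)$ is injective with image contained in $\{1,\ldots,n\}\times\{-(n-1),\ldots,n-1\}^{k/2}$, so that the box sum really dominates the sum over $S_n^*(\pi)$; both are routine.
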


\begin{proof}
Let $\pi$ be crossing and consider a sequence $\left(P_1,\ldots,P_k\right)\in S_{n}^{*}(\pi)$. Note that if there is an $l\in\left\{1,\ldots,k\right\}$ with $l\sim_\pi l+1$, where $k+1$ is identified with $1$, we immediately have
\begin{equation*}
 a(P_l) = a(P_{l+1}),
\end{equation*}
since $q_l = p_{l+1}$ by consistency and then $p_l = q_{l+1}$ by definition of $S_{n}^{*}(\pi)$. In particular,
\begin{equation*}
 \E\left[a(P_l) \cdot a(P_{l+1})\right] = 1.
\end{equation*}
The sequence $\left(P_1,\ldots, P_{l-1},P_{l+2}, \ldots,P_k\right)$ is still consistent because of the relation $q_{l-1} = p_l = q_{l+1} = p_{l+2}$. Since there are at most $n$ choices for $q_l = p_{l+1}$, it follows
\begin{equation*}
 \# S_{n}^{*}(\pi) \leq n \cdot \# S_{n}^{*}(\pi^{(1)}),
\end{equation*}
where $\pi^{(1)}\in \mathcal{P}\mathcal{P}(k-2) \backslash \mathcal{N}\mathcal{P}\mathcal{P}(k-2)$ is the pair partition induced by $\pi$ after eliminating the indices $l$ and $l+1$. Let $r$ denote the maximum number of pairs of indices that can be eliminated in this way. Since $\pi$ is crossing, there are at least two pairs left and hence, $r\leq\frac{k}{2}-2$. By induction, we conclude that
\begin{equation*}
 \# S_{n}^{*}(\pi) \leq n^r \cdot \# S_{n}^{*}(\pi^{(r)}),
\end{equation*}
where now $\pi^{(r)}\in \mathcal{P}\mathcal{P}(k-2r) \backslash \mathcal{N}\mathcal{P}\mathcal{P}(k-2r)$ is the still crossing pair partition induced by $\pi$. Thus, we so far have
\begin{align}
\begin{split}
& \sum_{\left(P_1,\ldots,P_k\right)\in S_{n}^{*}(\pi)} \left|\E\left[a(P_1)\cdot \ldots \cdot a(P_k)\right]\right| \\
& \qquad \qquad \qquad \qquad \leq n^r \sum_{(P^{(r)}_1,\ldots,P^{(r)}_{k-2r})\in S_{n}^{*}(\pi^{(r)})} \left|\E\left[a(P^{(r)}_1)\cdot \ldots \cdot a(P^{(r)}_k)\right]\right|.
\end{split}
\label{estimate}
\end{align}
Choose $i\sim_{\pi^{(r)}} i+j$ such that $j$ is minimal. We want to count the number of sequences $(P_1^{(r)},\ldots,P_{k-2r}^{(r)})\in S_{n}^{*}(\pi^{(r)})$ given that $p^{(r)}_i$ and $q^{(r)}_{i+j}$ are fixed. Therefore, we start with choosing one of $n$ possible values for $q^{(r)}_i$. But then, we can also deduce the value of
\begin{equation*}
 p^{(r)}_{i+j} = q^{(r)}_i - p^{(r)}_i + q^{(r)}_{i+j}.
\end{equation*}
Since $j$ is minimal, any element in $\left\{i+1,\ldots,i+j-1\right\}$ is equivalent to some element outside the set $\left\{i,\ldots,i+j\right\}$. There are $n$ possibilities to fix $P^{(r)}_{i+1}$ as $p^{(r)}_{i+1}=q^{(r)}_i$ is already fixed. Proceeding sequentially, we have $n$ possibilities for the choice of any pair $P^{(r)}_l$ with $l\in \left\{i+2,\ldots,i+j-2\right\}$ and there is only one choice for $P^{(r)}_{i+j-1}$ since $q^{(r)}_{i+j-1}=p^{(r)}_{i+j}$ is already chosen. For any other pair that has not yet been fixed, there are at most $n$ possibilities if it is not equivalent to one pair that has already been chosen. Otherwise, there is only one possibility. Hence, assuming that the elements $p^{(r)}_i$ and $q^{(r)}_{i+j}$ are fixed, we have at most
\begin{equation*}
 n \cdot n^{j-2} \cdot n^{\frac{k}{2}-r-j} = n^{\frac{k}{2}-r-1}
\end{equation*}
possibilities to choose the rest of the sequence $(P^{(r)}_1,\ldots,P^{(r)}_{k-2r})\in S_{n}^{*}(\pi^{(r)})$. Consequently, estimating the term in \eqref{estimate} further, we obtain
\begin{align*}
\sum_{\left(P_1,\ldots,P_k\right)\in S_{n}^{*}(\pi)} \left|\E\left[a(P_1)\cdot \ldots \cdot a(P_k)\right]\right| & \leq n^{\frac{k}{2}-1} \sum_{p^{(r)}_i,q^{(r)}_{i+j}=1}^{n} | \cov(|q^{(r)}_{i+j}-p^{(r)}_i|) | \\
& \leq C \cdot n^{\frac{k}{2}} \sum_{\tau=0}^{n-1} \left|\cov(\tau)\right| = o\left(n^{1+\frac{k}{2}}\right),
\end{align*}
since $\sum_{\tau=0}^{\infty} \left|\cov(\tau)\right| < \infty$ by condition $(C4)$.

\end{proof}

\noindent Lemma~\ref{crossing} now guarantees that we need to consider only non-crossing pair partitions, that is
\begin{equation*}
 \lim_{n\to\infty} \frac{1}{n} \E\left[\tr\left(\X_{n}^{k}\right)\right] = \lim_{n\to\infty} \frac{1}{n^{1+\frac{k}{2}}} \sum_{\pi\in\nc} \sum_{\left(P_1,\ldots,P_k\right)\in S_{n}^{*}(\pi)} \E\left[a(P_1)\cdot \ldots \cdot a(P_k)\right],
\end{equation*}
if the limits exist.

\begin{lemma}
 Let $\pi \in \nc$. For any $\left(P_1,\ldots,P_k\right)\in S_{n}^{*}(\pi)$, we have
\begin{equation*}
 \E\left[a(P_1)\cdot\ldots\cdot a(P_k)\right] = 1.
\end{equation*}
\label{le1}
\end{lemma}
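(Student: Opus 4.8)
The plan is to induct on the (even) integer $k$, peeling off one block of $\pi$ at a time. The starting observation, already exploited in the proof of Lemma~\ref{crossing}, is that whenever $l\sim_\pi l+1$ (with $k+1$ identified with $1$), the consistency relation $q_l=p_{l+1}$ together with the defining property $q_l-p_l=p_{l+1}-q_{l+1}$ of $S_n^*(\pi)$ forces $p_l=q_{l+1}$; hence $P_{l+1}=(q_l,p_l)$ and, by the symmetry $a(p,q)=a(q,p)$ of the field, $a(P_{l+1})=a(P_l)$. Such an adjacent block therefore contributes the factor $a(P_l)a(P_{l+1})=a(P_l)^2$, whose expectation is $1$ by $(C1)$.

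To run the induction I would next recall the elementary fact that every non-crossing pair partition has an adjacent block: picking a block $\{i,j\}$, $i<j$, with $j-i$ minimal, the non-crossing condition forces the partner of $i+1$ to lie strictly between $i$ and $j$ unless $j=i+1$, and the former alternative would violate minimality. Fix such an adjacent block $\{l,l+1\}$.

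The decisive step is a factorization through independence. Because $\pi$ is a pair partition and $\{l,l+1\}$ is one of its blocks, $\pi$-consistency shows that $\{l,l+1\}$ is the \emph{only} block on the diagonal $|p_l-q_l|$, so that every surviving pair $P_i$ sits on a strictly different diagonal. By $(C2)$ the variable $a(P_l)=a(P_{l+1})$ is then independent of $\{a(P_i):i\neq l,l+1\}$, whence
\begin{equation*}
\E\left[a(P_1)\cdots a(P_k)\right]=\E\left[a(P_l)^2\right]\cdot\E\Big[\prod_{i\neq l,l+1}a(P_i)\Big]=\E\Big[\prod_{i\neq l,l+1}a(P_i)\Big].
\end{equation*}
Deleting the indices $l$ and $l+1$ leaves, via the relation $q_{l-1}=p_l=q_{l+1}=p_{l+2}$ noted in the proof of Lemma~\ref{crossing}, a consistent sequence lying in $S_n^*(\pi')$, where $\pi'$ is the non-crossing pair partition on the remaining $k-2$ indices induced by $\pi$. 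The induction hypothesis gives $\E[\prod_{i\neq l,l+1}a(P_i)]=1$, and the base case $k=2$ is the direct computation $a(P_1)a(P_2)=a(p_1,q_1)^2$, closing the argument.

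The step I expect to require the most care is the independence claim: one must verify that the peeled block occupies a diagonal disjoint from those of all other pairs, which is precisely what the pair-partition structure together with $\pi$-consistency supplies, so that $(C2)$ applies and the expectation genuinely splits into a product of factors with mean $1$. The remaining bookkeeping—that the reduced sequence still belongs to $S_n^*(\pi')$ and that $\pi'$ is again non-crossing—is routine, since deleting an adjacent block changes neither the diagonals nor the matching of the surviving indices.
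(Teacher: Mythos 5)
Your proposal is correct and takes essentially the same route as the paper: both arguments rest on the facts that a non-crossing pair partition always contains an adjacent block $\{l,l+1\}$, that consistency together with the defining property of $S_{n}^{*}(\pi)$ forces $a(P_l)=a(P_{l+1})$, that deleting such a block leaves a consistent sequence for the induced non-crossing partition, and that (C2) together with (C1) turns each block into a factor $\E\left[a(P_l)^2\right]=1$. The only difference is organizational: the paper applies the successive-removal argument inside each block $\{l,m\}$ to show $a(P_l)=a(P_m)$ for every block and then factorizes the expectation once over all blocks, whereas you package the identical mechanism as an induction on $k$ that peels off and factors out one adjacent block per step.
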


\begin{proof}
 Let $l<m$ with $m\sim_\pi l$. Since $\pi$ is non-crossing, the number $l-m-1$ of elements between $l$ and $m$ must be even. In particular, there is $l\leq i< j\leq m$ with $i\sim_\pi j$ and $j=i+1$. By the properties of $S_{n}^{*}$, we have $a(P_i)=a(P_j)$, and the sequence $\left(P_1,\ldots, P_l,\ldots,P_{i-1},P_{i+2},\ldots,P_m,\ldots,P_k\right)$ is still consistent. Applying this argument successively, all pairs between $l$ and $m$ vanish and we see that the sequence $\left(P_1,\ldots,P_l,P_m,\ldots,P_k\right)$ is consistent, that is $q_l=p_m$. Then, the identity $p_l=q_m$ also holds. In particular, $a(P_l)=a(P_m)$. Since $l,m$ have been chosen arbitrarily, we obtain
\begin{equation*}
 \E\left[a(P_1)\cdot\ldots\cdot a(P_k)\right] = \prod_{\stackrel{l< m}{l\sim_\pi m}} \E\left[a(P_l)\cdot a(P_m)\right] = 1.
\end{equation*}
\end{proof}

\noindent It remains to verify

\begin{lemma}
 For any $\pi\in\nc$, we have
\begin{equation*}
 \lim_{n\to\infty} \frac{\# S_{n}^{*}(\pi)}{n^{\frac{k}{2}+1}} = 1.
\end{equation*}
\label{le2}
\end{lemma}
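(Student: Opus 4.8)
The plan is to count the number of $\pi$-consistent sequences in $S_n^*(\pi)$ for a fixed non-crossing pair partition $\pi$, and show that this count is asymptotically $n^{\frac{k}{2}+1}$. Since $\pi \in \nc$ has exactly $\frac{k}{2}$ blocks, each consisting of a single pair $\{l,m\}$, the defining condition of $S_n^*(\pi)$ forces $q_i - p_i = -(q_j - p_j)$ whenever $i \sim_\pi j$; that is, the two differences on a matched pair are negatives of one another. First I would establish an upper bound $\# S_n^*(\pi) \leq C \cdot n^{\frac{k}{2}+1}$ by a sequential counting argument of exactly the type used in the second case of the main proof and in Lemma~\ref{crossing}: one chooses $p_1$ (at most $n$ ways), and then processes the pairs $P_1, \dots, P_k$ in order, observing that $p_{j+1} = q_j$ is always determined by consistency, so each pair $P_j$ contributes only the choice of $q_j$; this choice is free (at most $n$ possibilities) when index $j$ opens a new block, and is completely determined (one possibility) when $j$ closes a block already seen, because the matching condition fixes $q_j - p_j$. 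Since there are $\frac{k}{2}$ blocks, hence $\frac{k}{2}$ free choices plus the initial choice of $p_1$, this yields the bound with exponent $\frac{k}{2}+1$.

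The substance of the lemma is the matching lower bound, and this is the step I expect to be the main obstacle: I must show that \emph{asymptotically almost all} of the $n^{\frac{k}{2}+1}$ candidate assignments actually produce valid sequences lying in $\{1,\dots,n\}^2$, so that no positive proportion is lost to boundary effects. The natural approach exploits the non-crossing structure directly. Using the recursive characterization of non-crossing pair partitions — there always exists an adjacent matched pair $i \sim_\pi i+1$ — I would build up an explicit parametrization: the initial index $p_1$ together with the $\frac{k}{2}$ signed differences $d_B := q_i - p_i$ associated to the blocks $B$ determine the entire sequence. Concretely, following the noncrossing nesting, every index $p_j$ and $q_j$ is an affine combination $p_1 + \sum_B \epsilon_{j,B}\, d_B$ with coefficients $\epsilon_{j,B} \in \{-1,0,1\}$ read off from the partition. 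The sequence is admissible precisely when all these $2k$ affine forms take values in $\{1,\dots,n\}$.

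To extract the lower bound I would argue that the set of admissible $(p_1, (d_B))$ has cardinality $(1-o(1)) n^{\frac{k}{2}+1}$. One clean way is the following: restrict the free parameters to a slightly shrunk box, say choosing each free difference $|d_B|$ in a range of size $\delta n$ and $p_1$ in $[\delta n, (1-\delta)n]$ for small $\delta>0$; since every $p_j, q_j$ is a bounded $\{-1,0,1\}$-combination of at most $\frac{k}{2}$ of the $d_B$ plus $p_1$, each such form is confined to an interval of length $O(\delta n)$ around $p_1$, so for $\delta$ small enough every coordinate stays within $\{1,\dots,n\}$ and the assignment is valid. This produces at least $c(\delta)\, n^{\frac{k}{2}+1}$ valid sequences; letting $\delta \to 0$ after the counting shows the proportion of the full box that is lost tends to zero, giving $\# S_n^*(\pi) = (1-o(1))\, n^{\frac{k}{2}+1}$. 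Combined with the upper bound, this forces the ratio $\# S_n^*(\pi)/n^{\frac{k}{2}+1}$ to converge to $1$. The delicate point throughout is bookkeeping the signs $\epsilon_{j,B}$ correctly from the non-crossing structure so that the constraint $\sum_{i=1}^k (q_i-p_i)=0$ is automatically satisfied and no hidden coincidences reduce the number of genuinely free parameters below $\frac{k}{2}+1$; this is exactly where the non-crossing hypothesis, as opposed to mere pairing, is essential.
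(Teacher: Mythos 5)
Your upper bound is fine (it is the paper's sequential count), and your instinct that the entire difficulty lies in the lower bound --- showing that essentially no parameter choices are lost to the requirement that all coordinates lie in $\{1,\dots,n\}$ --- is exactly right. But your resolution of that difficulty does not work. The shrunk-box construction produces at least $c(\delta)\,n^{\frac{k}{2}+1}$ admissible sequences with $c(\delta)$ of order $\delta^{k/2}$, since each of the $\frac{k}{2}$ free differences is confined to a window of length $\delta n$. This bound gets \emph{worse}, not better, as $\delta \to 0$: sending $\delta \to 0$ ``after the counting'' does not show that the proportion of the full box lost to boundary effects tends to zero; it only yields $\liminf_n \# S_n^*(\pi)/n^{\frac{k}{2}+1} \geq \sup_\delta c(\delta)$, a constant strictly less than $1$. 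As written, your argument proves that the count is of \emph{order} $n^{\frac{k}{2}+1}$, which is not the statement of the lemma.

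Moreover, the gap cannot be repaired within your framework, because the only structural input your lower bound uses --- the affine parametrization $p_j, q_j = p_1 + \sum_B \epsilon_{j,B}\, d_B$ with bounded coefficients, and the automatic cyclic consistency $\sum_i (q_i - p_i) = 0$ --- is available for \emph{every} pair partition with the $S_n^*$ sign rule, crossing or not; yet for crossing partitions the limit is genuinely below $1$. Concretely, for $k=4$ and $\pi = \{\{1,3\},\{2,4\}\}$ one finds $q_3 = p_1 - q_1 + q_2$, an affine form that leaves $\{1,\dots,n\}$ for a positive proportion of parameter choices, and in fact $\# S_n^*(\pi)/n^3 \to 2/3$. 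So no argument exploiting non-crossing only through ``bounded coefficients'' can reach the constant $1$. What non-crossing actually buys --- and what the paper's proof uses implicitly --- is that the affine forms \emph{degenerate}: if $\{l,m\}$ is a block with $l<m$, the $S_n^*$ rule combined with the adjacent-pair reduction you mention gives $q_l = p_m$ and $q_m = p_l$, i.e.\ closing a block returns you exactly to the coordinate at which it was opened. By induction along the sequence, every coordinate $p_j, q_j$ is then literally equal to one of the $\frac{k}{2}+1$ free parameters ($p_1$, or $q_i$ for an opening index $i$), so there is no boundary loss at all: every choice of these parameters in $\{1,\dots,n\}$ yields a sequence in $S_n^*(\pi)$, except for the $O(n^{k/2})$ tuples in which two distinct blocks receive the same absolute difference (these are excluded by the ``$\Longleftrightarrow$'' in the definition of $\pi$-consistency). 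This gives $\# S_n^*(\pi) = n^{\frac{k}{2}+1}(1+o(1))$ and is the step missing from your argument.
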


\begin{proof}
 To calculate the number of elements in $S_{n}^{*}(\pi)$, first choose $P_1$. There are $n^2$ possibilities for that choice. If $1\sim_\pi 2$, then $P_2$ is uniquely determined since $p_2 = q_1$ and by definition of $S_{n}^{*}(\pi)$, $q_2 = p_1$. If $1\not\sim_\pi 2$, then there are $n-1$ possibilities to fix $P_2$. Proceeding in the same way, we see that if $i \in \left\{2,\ldots,k\right\}$ is equivalent to some element in $\left\{1,\ldots,i-1\right\}$, there is always only one value $P_i$ can take. Otherwise there are asymptotically $n$ choices. The latter case will occur exactly $\frac{k}{2}-1$ times. In conclusion,
\begin{equation*}
 \# S_{n}^{*}(\pi) \sim n^2 \cdot n^{\frac{k}{2}-1} = n^{1+\frac{k}{2}}.
\end{equation*}

\end{proof}

\noindent Lemma~\ref{le1} and Lemma~\ref{le2} now provide that
\begin{align*}
 \lim_{n\to\infty} \frac{1}{n} \E\left[\tr\left(\X_{n}^{k}\right)\right] &= \lim_{n\to\infty} \frac{1}{n^{1+\frac{k}{2}}} \sum_{\pi\in\nc} \sum_{\left(P_1,\ldots,P_k\right)\in S_{n}^{*}(\pi)} \E\left[a(P_1)\cdot \ldots \cdot a(P_k)\right] \\
&= \lim_{n\to\infty} \frac{1}{n^{1+\frac{k}{2}}} \sum_{\pi\in\nc} \# S_{n}^{*}(\pi) = \# \nc.
\end{align*}

\noindent Since the number of non-crossing pair partitions $\# \nc$ equals exactly the Catalan number $C_{\frac{k}{2}}$, we can conclude that the expected empirical spectral distribution of $\X_n$ tends to the semi-circle law. This is the asserted convergence in expectation. \\

It remains to deduce almost sure convergence. Therefore, we want to follow the ideas of \cite{brycdembo}. To this end, we need

\begin{lemma}
 Suppose the conditions of Theorem~\ref{main} hold. Then, for any $k,n \in\mathbb{N}$,

\begin{equation*}
 \E\left[\left(\tr\left(\X_{n}^{k}\right) - \E\left[\tr \left(\X_{n}^{k}\right)\right]\right)^4\right] \leq C \cdot n^{2}.
\end{equation*}

\label{lemma}
\end{lemma}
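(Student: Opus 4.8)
The plan is to expand the fourth moment of the centered trace and group the resulting terms so that the independence of diagonals (C2) forces most contributions to vanish, leaving only a bounded number of ``matched'' configurations that each contribute $O(n^2)$. Writing $Z_n := \tr(\X_n^k) - \E[\tr(\X_n^k)]$, I would first express
\begin{equation*}
 \E\left[Z_n^4\right] = \frac{1}{n^{2k}} \sum_{\vi} \cov\text{-type terms involving } \E\left[\prod_{s=1}^4 a(P^{(s)}_{1})\cdots a(P^{(s)}_{k})\right],
\end{equation*}
where the outer sum runs over four $k$-tuples of consistent pairs, one from each factor of the product, and each factor has had its own expectation subtracted. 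The key structural observation is that after centering, a given term survives only if no diagonal used by any of the four tuples appears in exactly one of the four $k$-step walks in an ``isolated'' fashion; more precisely, by (C2) the four blocks of indices become independent across factors unless the diagonals (the values $|p-q|$) are shared between different factors, and the centering kills every term in which some factor is stochastically independent of the product of the other three.

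Next I would carry out the combinatorial bookkeeping. The total number of index choices across all four tuples is $4k$, but the constraint $q_j = p_{j+1}$ within each tuple (cyclic consistency) removes one degree of freedom per step, and each diagonal-matching equality $|p_i - q_i| = |p_j - q_j|$ removes a further degree of freedom. I would argue, exactly as in the proofs of Lemma~\ref{snstar} through Lemma~\ref{le2}, that the dominant surviving configurations are those in which the $2k$ edges (two per factor on average) pair up perfectly, and that the requirement that each of the four factors be ``linked'' to at least one other factor (forced by centering) reduces the free index count by two relative to the uncentered fourth moment. Since the uncentered count would give $n^{2k+2}$ after normalization by $n^{2k}$, the centering gains a factor $n^{-2}$, and combined with condition (C4) controlling the summation over diagonal distances $\sum_\tau |\cov(\tau)| < \infty$, this yields the bound $C\cdot n^{2k+2-2k} = C\cdot n^2$ after accounting for the $n^{-2k}$ prefactor; the uniform moment bounds from (C1) together with the Hölder estimate \eqref{holder} keep every individual expectation bounded by $m_{4k}$.

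The main obstacle, and the step I would spend the most care on, is making precise the claim that centering removes exactly two powers of $n$. The natural way is to think of the four walks as forming a single graph on the index set: define an equivalence on $\{1,2,3,4\}$ by declaring two factors related if they share a diagonal, and observe that a term survives centering only if this relation has no singleton block, i.e. the four factors partition into blocks each of size at least two. The cases are then that the four factors split as $2+2$ or that they all lie in a single block of size four (or $3+1$, which is forbidden, and $4$ with finer internal structure). A clean way to organize this is to count, for each such admissible pairing of factors, the number of free indices and show it never exceeds $k+1$ per pair of coupled factors minus the coupling deficit, so that the grand total is at most $2(k+1)$ free indices, giving $n^{2k+2}$ before normalization. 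I would need to verify that the shared-diagonal constraints between coupled factors do not allow more freedom than this, which is where the analogue of the ``positive pair'' argument of Lemma~\ref{snstar} and the single-diagonal summation against $\cov$ in Lemma~\ref{crossing} get reused; the decay condition (C4) is what prevents the diagonal-summation from contributing an extra factor of $n$.

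Once this bound is established, the almost sure convergence follows in the standard way: by Chebyshev's inequality together with $\E[Z_n^4] \le Cn^2$ one gets $\P(|Z_n|/n > \vep) \le C/(\vep^4 n^2)$, which is summable in $n$, so Borel--Cantelli gives $\frac{1}{n}Z_n \to 0$ almost surely for each fixed $k$; combining this with the already-established convergence of $\frac{1}{n}\E[\tr(\X_n^k)]$ to the Catalan numbers and intersecting over the countable family of $k\in\mathbb{N}$ yields almost sure weak convergence of $\mu_n$ to the semi-circle law.
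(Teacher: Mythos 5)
Your overall skeleton matches the paper's proof: expand the centered fourth moment over four consistent $k$-tuples, classify terms by the partition of the $4k$ pairs into diagonal-distance classes, use (C2) and the centering to kill the unmatched partitions, and count what remains. But there is a genuine gap exactly where the real work lies: your claim that the linkage forced by centering ``reduces the free index count by two'' is asserted, not proved, and you say yourself that you ``would need to verify that the shared-diagonal constraints between coupled factors do not allow more freedom than this.'' That verification is the core of the paper's argument. Concretely, for a matched partition with $r$ equivalence classes the naive count (choose the $r$ class values, the four starting points, then walk around each cycle with at most two choices per step) gives $C\, n^{r+4}$, which suffices only when $r \le 2k-2$. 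For a perfect pairing, $r = 2k$, it gives $n^{2k+4}$, i.e.\ $n^{4}$ after dividing by $n^{2k}$ --- too big by a factor of $n^{2}$. The paper closes this gap by showing that matchedness produces, when $r=2k-1$, a pair $P_l^{(i)}$ that is equivalent to no other pair in its own walk, and, when $r=2k$, two such pairs lying in two different walks; such a pair need not have its class value chosen at all, since cyclic consistency forces $p_l^{(i)}=q_{l-1}^{(i)}$ and $q_l^{(i)}=p_{l+1}^{(i)}$, so each one saves a factor of $n$. Both the existence statement (a small case analysis on the class sizes) and this reconstruction trick are missing from your sketch, and the ``positive pair'' argument of Lemma~\ref{snstar} that you propose to reuse does not substitute for it: that lemma compares $S_n(\pi)$ with $S_n^{*}(\pi)$ inside a single walk and says nothing about cross-walk linkage.

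A second, more minor point: condition (C4) plays no role in this lemma, and your appeal to it (``the decay condition (C4) is what prevents the diagonal-summation from contributing an extra factor of $n$'') signals a confusion with the proof of Lemma~\ref{crossing}, where the sum $\sum_\tau |\cov(\tau)|$ genuinely appears. Here every expectation is simply bounded by a constant via (C1) and H\"older as in \eqref{holder}, and the bound $C n^{2}$ comes from pure counting of matched configurations. Your bookkeeping sentence is also internally inconsistent (the uncentered, unnormalized count is $n^{2k+4}$, not $n^{2k+2}$ after normalization), though the final exponent you state is the intended one; and the closing Chebyshev/Borel--Cantelli paragraph, while correct, is not part of the lemma but of the deduction that follows it in the paper.
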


\begin{proof}

Fix $k,n \in\mathbb{N}$. Using the notation
\begin{equation*}
 P = (P_1,\ldots,P_k) = ( (p_1,q_1), \ldots, (p_k,q_k) ), \qquad a (P) = a(P_{1})\cdot \ldots \cdot a(P_{k}),
\end{equation*}

\noindent we have that

\begin{multline}
 \E\left[\left(\tr\left(\X_{n}^{k}\right) - \E\left[\tr \left(\X_{n}^{k}\right)\right]\right)^4\right] \\
= \frac{1}{n^{2k}} \sum_{\pi^{(1)},\ldots,\pi^{(4)} \in \mathcal{P}(k)} \sum_{P^{(i)}\in S_n\left(\pi^{(i)}\right), i=1,\ldots,4} \E\Big[\prod_{j=1}^{4} \left(a (P^{(j)}) - \E\left[a (P^{(j)})\right]\right)\Big].
\label{eq1}
\end{multline}

Now consider a partition $\boldsymbol{\pi}$ of $\left\{1,\ldots,4k\right\}$. We say that a sequence $(P^{(1)},\ldots,P^{(4)})$ is $\bpi$ consistent if each $P^{(i)}, i=1,\ldots,4$, is a consistent sequence and

\begin{equation*}
\big|q_{l}^{(i)} - p_{l}^{(i)}\big| \ = \ \left|q_{m}^{(j)} - p_{m}^{(j)}\right| \quad \Longleftrightarrow \quad l + (i-1) k \ \sim_{\boldsymbol{\pi}} \ m + (j-1) k.
\end{equation*}
Let $\mathcal{S}_n (\bpi)$ denote the set of all $\bpi$ consistent sequences with entries in $\left\{1,\ldots,n\right\}$. Then, \eqref{eq1} becomes
\begin{multline}
 \E\left[\left(\tr\X_{n}^{k} - \E\left[\tr \X_{n}^{k}\right]\right)^4\right] \\
= \frac{1}{n^{2k}} \sum_{\bpi\in\mathcal{P}(4k)} \sum_{(P^{(1)}, \ldots, P^{(4)})\in \mathcal{S}_n\left(\bpi\right)} \E\Big[\prod_{j=1}^{4} \left(a (P^{(j)}) - \E\left[a (P^{(j)})\right]\right)\Big].
\label{eq2}
\end{multline}

We want to analyze the expectation on the right hand side. Therefore, fix a $\bpi \in \mathcal{P}(4k)$. We call $\boldsymbol{\pi}$ a matched partition if
\begin{enumerate}
 \item any equivalence class of $\bpi$ contains at least two elements and
 \item for any $i\in\left\{1,\ldots,4\right\}$ there is a $j\neq i$ and $l,m\in\left\{1,\ldots,k\right\}$ with
	\begin{equation*}
	 l + (i-1) k \ \sim_{\boldsymbol{\pi}} \ m + (j-1) k.
	\end{equation*}
\end{enumerate}
In case $\boldsymbol{\pi}$ is not matched, we can conclude that
\begin{align*}
\sum_{(P^{(1)}, \ldots, P^{(4)})\in \mathcal{S}_n\left(\bpi\right)} \E\Big[\prod_{j=1}^{4} \left(a (P^{(j)}) - \E\left[a (P^{(j)})\right]\right)\Big] = 0.
\end{align*}

\noindent Thus, we only have to consider matched partitions to evaluate the sum in \eqref{eq2}. Let $\boldsymbol{\pi}$ be such a partition and denote by $r = \#\bpi$ the number of equivalence classes of $\bpi$. Note that condition $(i)$ implies $r\leq 2k$. To count all $\bpi$ consistent sequences $(P^{(1)},\ldots,P^{(4)})$, we first choose one of at most $n^r$ possibilities to fix the $r$ different equivalence classes. Afterwards, we fix the elements $p_1^{(1)},\ldots,p_1^{(4)}$, which can be done in $n^4$ ways. Since now the differences $|q_{l}^{(i)} - p_{l}^{(i)}|$ are uniquely determined by the choice of the corresponding equivalence classes, we can proceed sequentially to see that there are at most two choices left for any pair $P_l^{(i)}$. To sum up, we have at most
\begin{equation*}
 2^{4k} \cdot n^4 \cdot n^r = C \cdot n^{r+4}
\end{equation*}
possibilities to choose $(P^{(1)},\ldots,P^{(4)})$. If now $r\leq 2k-2$, we can conclude that
\begin{equation}
 \# \mathcal{S}_n(\bpi) \leq C \cdot n^{2k+2}.
\label{eq3}
\end{equation}
Hence, it remains to consider the case where $r=2k-1$ and $r=2k$, respectively. \\

To begin with, let $r=2k-1$. Then, we have either two equivalence classes with three elements or one equivalence class with four. Since $\bpi$ is matched, there must exist an $i\in\left\{1,\ldots,4\right\}$ and an $l\in\left\{1,\ldots,k\right\}$ such that $P_l^{(i)}$ is not equivalent to any other pair in the sequence $P^{(i)}$. Without loss of generality, we can assume that $i=1$. In contrast to the construction of $(P^{(1)},\ldots,P^{(4)})$ as above, we now alter our procedure as follows: We fix all equivalence classes except of that $P_l^{(1)}$ belongs to. There are $n^{r-1}$ possibilities to accomplish that. Now we choose again one of $n^4$ possible values for $p_1^{(1)},\ldots,p_1^{(4)}$. Hereafter, we fix $q_m^{(1)}$, $m=1,\ldots,l-1$, and then start from $q_k^{(1)} = p_1^{(1)}$ to go backwards and obtain the values of $p_{k}^{(1)}, \ldots, p_{l+1}^{(1)}$. Each of these steps leaves at most two choices to us, that is $2^{k-1}$ choices in total. But now, $P_l^{(1)}$ is uniquely determined since $p_l^{(1)} = q_{l-1}^{(1)}$ and $q_l^{(1)} = p_{l+1}^{(1)}$ by consistency. Thus, we had to make one choice less than before, implying \eqref{eq3}. \\

Now, let $r=2k$. In this case, each equivalence class has exactly two elements. Since we consider a matched partition, we can find here as well an $l\in\left\{1,\ldots,k\right\}$ such that $P_l^{(1)}$ is not equivalent to any other pair in the sequence $P^{(1)}$. But in addition to that, we also have an $m\in\left\{1,\ldots,k\right\}$ such that, possibly after relabeling, $P_m^{(2)}$ is neither equivalent to any element in $P^{(1)}$ nor to any other element in $P^{(2)}$. Thus, we can use the same argument as before to see that this time, we can reduce the number of choices to at most $C \cdot n^{r+2} = C \cdot n^{2k+2}$. In conclusion, \eqref{eq3} holds for any matched partition $\bpi$. To sum up our results, we obtain that
\begin{align*}
& \E\left[\left(\tr\X_{n}^{k} - \E\left[\tr \X_{n}^{k}\right]\right)^4\right] \\
& \quad = \frac{1}{n^{2k}} \sum_{\stackrel{\bpi\in\mathcal{P}(4k),}{\bpi \ \text{matched}}} \sum_{(P^{(1)}, \ldots, P^{(4)})\in \mathcal{S}_n\left(\bpi\right)} \E\Big[\prod_{j=1}^{4} \left(a (P^{(j)}) - \E\left[a (P^{(j)})\right]\right)\Big] \leq C \cdot n^{2},
\end{align*}
which is the statement of Lemma~\ref{lemma}.

\end{proof}

From Lemma~\ref{lemma} and Chebyshev's inequality, we can now conclude that for any $\varepsilon>0$ and any $k,n\in\mathbb{N}$,
\begin{equation*}
 \mathbb{P}\left( \left| \frac{1}{n} \tr\X_n^k - \E \left[\frac{1}{n}\tr\X_n^k\right] \right| > \varepsilon \right) \leq \frac{C}{\varepsilon^4 n^2}.
\end{equation*}
Hence, the convergence in expectation part of Theorem~\ref{main} together with the Borel-Cantelli lemma yield that
\begin{equation*}
 \lim_{n\to\infty} \frac{1}{n} \tr\X_n^k = \E \left[Y^k\right] \qquad \text{almost surely},
\end{equation*}
where $Y$ is distributed according to the standard semi-circle law. In particular, we have that, with probability $1$, the empirical spectral distribution of $\X_n$ converges weakly to the semi-circle law.

\section{Examples}

\subsection{Gaussian processes} Let $\left\{a(p,p+r), p\in\mathbb{N}\right\}$, $r\in\N_0$, be independent families of stationary Gaussian Markov processes with mean $0$ and variance $1$. In addition to this, we assume that the processes are non-degenerate in the sense that $\E\left[a(p,p+r)|a(q,q+r),q\leq p-1\right] \neq a(p,p+r)$. In this case, the conditions of Theorem~\ref{main} are satisfied. Indeed, for fixed $r\in\N_0$ and any $p\in\N$, we can represent $a_p := a(p,p+r)$ as
\begin{equation*}
 a_p = x_p \sum_{j=1}^{p} y_j \xi_j,
\end{equation*}
where $\left\{\xi_j\right\}$ is a family of independent standard Gaussian variables and $x_p,y_1,\ldots,y_p \in \R\backslash\left\{0\right\}$. Then, we obtain
\begin{equation*}
 \cov(\tau) = \cov(a_p,a_{p+\tau}) = \frac{x_{p+\tau}}{x_p},
\end{equation*}
implying $\cov(\tau)=\cov(1)^\tau$ for any $\tau\in\N_0$. By calculating the second moment of $a_2 = x_2 y_2 \xi_2 + \cov(1) a_1$, we can conclude that $|\cov(1)|<1$. Thus, we have $\sum_{\tau=0}^{\infty} |\cov(\tau)|<\infty$.

\subsection{Markov chains with finite state space} We want to verify that condition (C$4$) holds for stationary $N$-state Markov chains which are ergodic and reversible. Let $\left\{X_k,k\in\N\right\}$ be such a Markov chain with mean $0$ and variance $1$. Denote by $P$ the corresponding $N\times N$ transition matrix and by $\pi$ its stationary distribution. Reversibility yields that $P$ is diagonalizable. Hence, for any $k\in\N$, we can write
\begin{equation*}
 P^k = TD^kT^{-1}
\end{equation*}
for some invertible matrix $T$ and a diagonal matrix $D=\mathrm{diag}(\la_1,\ldots,\la_N)$. Denoting by $s_1,\ldots,s_N$ the $N$ possible states of the chain, we get
\begin{align*}
 \cov(X_n,X_{n+k}) = \sum_{i,j=1}^{N} s_i s_j \pi(i) P^k(i,j) = \sum_{l=1}^{N} \la_l^k \left(\sum_{i=1}^{N} s_i \pi(i) T(i,l) \sum_{j=1}^{N} s_j T^{-1}(l,j) \right).
\end{align*}
Since $P$ is stochastic, we have $|\la_l|\leq 1$ for any $l=1,\ldots,N$. If $|\la_l| = 1$, ergodicity implies that $\la_l=1$. The corresponding space of right eigenvectors is spanned by the vector $v=(1,\ldots,1)^T$. Consequently,
\begin{equation*}
 \sum_{i=1}^{N} s_i \pi(i) T(i,l) = c \ \E\left[X_1\right] = 0.
\end{equation*}
Thus $\cov(X_n,X_{n+k})$ decays exponentially to $0$ as $k\to\infty$ and condition (C$4$) is satisfied.

\bibliographystyle{alpha}
\bibliography{gabi}

\begin{thebibliography}{BDJ06}

\bibitem[Arn71]{Arnold}
L.~Arnold.
\newblock On wigner's semicircle law for the eigenvalues of random matrices.
\newblock {\em Z. Wahrscheinlichkeitstheorie und Verw. Gebiete}, 19:191--198,
  1971.

\bibitem[BDJ06]{brycdembo}
W{\l}odzimierz Bryc, Amir Dembo, and Tiefeng Jiang.
\newblock Spectral measure of large random {H}ankel, {M}arkov and {T}oeplitz
  matrices.
\newblock {\em Ann. Probab.}, 34(1):1--38, 2006.

\bibitem[Gui09]{alice_stflour}
Alice Guionnet.
\newblock {\em Large random matrices: lectures on macroscopic asymptotics},
  volume 1957 of {\em Lecture Notes in Mathematics}.
\newblock Springer-Verlag, Berlin, 2009.
\newblock Lectures from the 36th Probability Summer School held in Saint-Flour,
  2006.

\bibitem[KP94]{KhorunzhyPastur94}
A.~M. Khorunzhy and L.~Pastur.
\newblock On the eigenvalue distributon of the deformed wigner ensemble of
  random matrices.
\newblock {\em Advances in Soviet Mathematics}, (19):97--127, 1994.

\bibitem[SSB07]{Schenker_Schulz-Baldes}
Jeffrey Schenker and Hermann Schulz-Baldes.
\newblock Gaussian fluctuations for random matrices with correlated entries.
\newblock {\em Int. Math. Res. Not. IMRN}, (15):Art. ID rnm047, 36, 2007.

\bibitem[Wig58]{Wigner}
E.~Wigner.
\newblock On the distribution of the roots of certain symmetric matrices.
\newblock {\em Ann. of Math.}, 67:325--328, 1958.

\end{thebibliography}

\end{document}